 \newtheorem{theorem}{Theorem}
\newtheorem{lemma}[theorem]{Lemma}
\theoremstyle{definition}
\theoremstyle{remark}
\newcommand\cD{\mathcal{D}}
\newcommand\cP{\mathcal{P}}
\newcommand\bE{\mathbb{E}}
\newcommand\bN{\mathbb{N}}
\newcommand\bR{\mathbb{R}}
\newcommand{\spt}[1]{\text{spt}\left({#1}\right)}
\DeclareMathOperator{\dist}{\text{dist}}
\DeclareMathOperator{\leqc}{\leq_c}
\newcommand{\cco}[1]{\overline{\text{co}}\left({#1}\right)}
\newcommand{\ri}[1]{\text{ri}\left({#1}\right)}
\newcommand{\inside}[1]{\text{int}\left({#1}\right)}
\DeclareMathOperator{\MT}{\mathsf{MT}}
\DeclareMathOperator{\cpl}{\mathsf{Cpl}}
\DeclareMathOperator{\dopt}{\psi_{\text{lim}}}
\DeclareMathOperator{\Caff}{C_{\mathrm{q}}^{\text{aff}}\left(\mathbb{R}^{d}\right)}
\DeclareMathOperator{\bary}{\text{bar}}
\DeclareMathOperator{\mcov}{\text{MCov}}
\DeclareMathOperator{\sm}{\mathcal{M}_2}
\newcommand\wass{\mathcal{W}_2}
\author[1]{Walter Schachermayer \thanks{\href{mailto:walter.schachermayer@univie.ac.at}{walter.schachermayer@univie.ac.at}}}
\affil[1]{Department of Mathematics, University of Vienna}
\author[2]{Pietro Siorpaes 
\thanks{\href{mailto:p.siorpaes@imperial.ac.uk}{p.siorpaes@imperial.ac.uk}}}
\affil[2]{Department of Mathematics, Imperial College London}
\title{Stretched Brownian Motion: convergence of dual optimising sequences 
	\thanks{This research was funded by the Austrian Science Fund (FWF) [Grant DOIs: 10.55776/P35197 and 10.55776/P35519).
For open access purposes, the authors have applied a CC BY public copyright license to any author accepted
manuscript version arising from this submission.
			}
		}
\date{\today. }
\begin{document}

\maketitle

 \begin{abstract}
We consider an irreducible pair $\mu \leqc \nu$ of probability measures on $\mathbb{R}^d$ in convex order. In \cite{BBST23},  Backhoff, Beiglb\"ock, Schachermayer and Tschiderer 	have shown that the Stretched Brownian Motion  from $\mu$ to $\nu$ is a Bass martingale, that there exists a dual optimiser $\dopt$, and the following somewhat surprising convergence result:  
by adding affine functions, one can make  any dual optimising sequence $(\psi_n)_n$ (satisfying some minor technical conditions)
converge pointwise to $\dopt$, save possibly on the relative boundary of the convex hull 
of the support of $\nu$. In the present paper we deal with the more delicate issue of convergence on said boundary, showing in particular that $\dopt$ is $\nu$ a.s.~finite, and $(\psi_n)_n$ converges to $\dopt$ in $\nu$-measure.

\bigskip

\noindent\emph{Keywords:}  Martingale Optimal transport, Benamou-Brenier, Stretched Brownian motion, Bass martingale.\\
\emph{Mathematics Subject Classification (2010):} Primary 60G42, 60G44; Secondary 91G20.

 \end{abstract}

\section{Introduction}
In mathematical finance one can construct several arbitrage-free models which are compatible with  observed market prices of vanilla options on the spot price $S=(S_t)_{t\in [0,T]}$.  In practice, only options with some possible maturities $0=T_0< T_1 <  \ldots <  T_n=T$ are traded, whereas to calibrate most models one would need  vanilla options prices across the whole continuum of times $t\in [0,T]$. This has traditionally been dealt with via a time-interpolation of the volatility at the unobserved maturities, which can however introduce instabilities. 
 
A possible and quite recent solution  is to consider models which are instead calibrated  to discrete marginals, such as the local variance Gamma model \cite{CaNa17},  the martingale Schr\"odinger bridge \cite{Hela19MSB}, and the Bass local volatility model which, in the continuous-time limit, converges to the well-known Dupire local volatility model \cite{Du97}. 
 
 The Bass local volatility model $S$ arises in a very natural way, as $S$ is the continuous process such that  $(S_t)_{T_i\leq t \leq T_{i+1}}$ is the martingale diffusion which best interpolates between the (known!) marginals $\mu_i:=Law(S_{T_i})$ and $\mu_{i+1}:=Law(S_{T_{i+1}})$, in the sense that it is the one which is as close as possible to Brownian Motion \cite{BaBeHuKa20}. Since we only need to restrict our attention to the generic interval $[T_i, T_{i+1}]$, we assume w.l.o.g.~that $T_i=0, T_{i+1}=1$, and write $\mu,\nu$ for $\mu_i,\mu_{i+1}$. 

 The corresponding optimisation problem,   whose solution $S$ has been called Stretched Brownian Motion (SBM), is  the Martingale Benamou-Brenier (MBB) problem, which was introduced in  \cite{BaBeHuKa20} using a probabilistic approach and in \cite{HuTr19} using PDEs, and  had already appeared independently in \cite{Loe18Opt}  in the context of market impact in finance. SBM has been studied in \cite{BBST23,BaScTs23,ScTs24}. If  $(\mu,\nu)$ is irreducible the corresponding SBM $S$ is a Bass martingale \cite[Theorem 1.3]{BBST23}, and thus admits an explicit construction in terms of a probability $\alpha$ on $\bR^d$, and a convex function $\psi: \bR^d \to (-\infty,\infty]$.
 To calibrate the model one needs to compute such $\alpha$, which is the solution to a fixed-point equation and is the minimiser of the so-called Bass functional, and thus  can be computed via  a fixed-point iteration scheme and be identified via the gradient descent for the Bass functional, see \cite{CoHe21,AcMaPa23,JoLoOb23,QiChYaFe24,BaScTs23,BaPaSc24}.

\vspace{0.3cm}
Since the SBM is defined as the solution to a convex optimisation problem, its study is carried out also by considering the corresponding dual optimisation problem. 
The main result of this paper strengthens the existing results about the convergence of  optimising sequences for such dual optimisation problem, whose solution $\psi$ (which exists if $(\mu,\nu)$ is irreducible) is the convex function mentioned above. 

Before stating our main result, we now introduce some notations and definitions.
Let $\cP(\bR^{d})$ be the space of Borel probabilities on $\bR^d$,  $\cP_{p}(\bR^{d})$ be its subspace of probabilities with finite $p^{th}$ moment, and for $p\geq 1$ let $\cP_{p}^x(\bR^{d})$ be the subspace of all the $\beta \in \cP_{p}(\bR^{d})$ whose barycentre $\bary(\beta):=\int y \beta(dy)$ equals $x\in \bR^d$. We assume that $\mu,\nu\in \cP_2(\mathbb{R}^d)$, and  denote by $\cpl(\mu, \nu)$ the set of transports from $\mu$ to $\nu$, i.e., the set  of probabilities $\pi$ on $\mathbb{R}^d \times \mathbb{R}^d$ with marginals $\mu$ and $\nu$. 
Each coupling $\pi \in \cpl(\mu, \nu)$ can be disintegrated with respect to $\mu$, i.e.~there exists a ($\mu$ a.s.~unique)  kernel $(\pi_x)_x$ such that $\pi( dx, dy)=  \mu(d x)\pi_{x}(dy)$, called the $\mu$-disintegration of $\pi$; clearly $\nu \in \cP_2(\bR^d)$ implies $\pi_x\in \cP_2(\bR^d)$   for $\mu$ a.e.~$x$.
We call $\pi$ a martingale transport, and $(\pi_x)_x$ a martingale kernel, if $\pi_x \in \mathcal{P}_2^x (\mathbb{R}^d)$ holds $\mu(dx)$-a.e.; we denote with  $MT(\mu,\nu)$ the set of all  martingale transports in $\cpl(\mu, \nu)$.
It follows from no-arbitrage arguments that $\mu, \nu$ are in convex order $\mu\leqc \nu$, or equivalently (by Strassen's theorem) that there exists a martingale transport $\pi \in \MT(\mu, \nu)$.

By definition the Stretched Brownian Motion $M^*$ between $\mu$ and $\nu$ is the unique optimiser to the  continuous-time optimisation problem 
\begin{align}
\label{eq: def SBM as maximiser}
\inf _{\substack{M_0 \sim \mu, M_1 \sim v, M_t=M_0+\int_0^t \sigma_s d B_s}} \mathbb{E}\left[\int_0^1\left|\sigma_t-\mathrm{Id}\right|_{\mathrm{HS}}^2 d t\right],
\end{align}
where $B$ is a Brownian motion on $\mathbb{R}^d$ and $|\cdot|_{\text {HS }}$ denotes the Hilbert-Schmidt norm.
It turns out that problem \eqref{eq: def SBM as maximiser} is equivalent to the  discrete-time optimisation problem 
\begin{align}
\label{eq: primal problem}
\sup_{\pi \in \MT(\mu,\nu)}\int \mcov(\pi_x,\gamma) \mu(dx),
\end{align}
where $\gamma$ denotes the standard Gaussian law on $\mathbb{R}^d$ and 
$$
\mcov\left(p_1, p_2\right):=\sup_{q \in \cpl(p_1, p_2)}  \int\left\langle x_1, x_2\right\rangle q\left(d x_1, d x_2\right), \quad p_1, p_2 \in \cP_2(\bR^d)
$$
is the maximal covariance between $p_1$ and $p_2$. Indeed, the unique optimiser $\pi^{SBM}$ of \cref{eq: primal problem} is closely related to $M^*$,  and so are the corresponding optimal values; so, $\pi^{SBM}$ is  also called the Stretched Brownian Motion between $\mu$ and $\nu$. 
To study problems \eqref{eq: def SBM as maximiser},\eqref{eq: primal problem} it is  useful to consider the dual optimisation problem
\begin{align}
\label{eq: dual opt problem}
D(\mu,\nu):= \inf_{\substack{\mu(\psi<\infty)=1 \\ \psi \text { convex} }}\cD(\psi),
\end{align}
where  given some $\pi \in \MT(\mu, \nu)$  the functional 
\begin{align}
\label{eq: def D}
\cD(\psi):=\int\left(\int \psi(y) \pi_{x}(d y)-\varphi^{\psi}(x)\right) \mu(d x),
\end{align}
is  defined for
 $\psi: \bR^d \to (-\infty,\infty]$ convex and $\mu$ a.s.~finite via the auxiliary function
\begin{align}
\label{eq: def varphi}
\varphi^{\psi}(x):=\inf _{p \in \mathcal{P}_{2}^{x}\left(\mathbb{R}^{d}\right)} \left(\int \psi d p - \mcov(p, \gamma)\right).
\end{align}
One can check that $\cD(\psi+a)=\cD(\psi)$ if $a$ is affine. By   taking $p=\delta_x$ in \cref{eq: def varphi} and applying Jensen's inequality, we get
\begin{align}
\label{eq: varphi psi and int psi}
-\infty \leq \varphi^{\psi}(x)\leq \psi(x) \in \bR \quad \text{and} \quad \bR \ni \psi(x) \leq \int \psi(y) \pi_{x}(d y)\leq \infty  \text{ for $\mu$ a.e.~} x, 
\end{align}
which shows that $\int \psi d\pi_x -\phi^\psi(x)$ is well defined  and belongs to $[0,\infty]$ for $\mu$ a.e.~$x$, and thus $\cD(\psi)$ is well defined and $\cD(\psi)\in [0,\infty]$, and if $\cD(\psi)<\infty$ holds then
\begin{align}
\label{eq: varphi and int psi finite}
-\infty < \varphi^{\psi}(x)  \leq \int \psi d \pi_x<\infty  \text{ for $\mu$ a.e.~} x. 
\end{align}

We recall  that  $D(\mu,\nu)$ does not depend on the choice of $\pi \in \MT(\mu, \nu)$ used in the definition of $\cD$ 
and  \cite[Theorem 3.3, Lemma 3.7]{BBST23}
\begin{align}
\label{eq: no duality gap}
\sup_{\pi \in \MT(\mu,\nu)}\int \mcov(\pi_x,\gamma) \mu(dx) = \int  \mcov(\pi^{SBM}_x, \gamma) \mu(d x) =D(\mu,\nu)
 \in \bR .
\end{align}
Denote by $C:=\cco{\spt{\nu}}$ the closed convex hull of the support $\spt{\nu}$ of $\nu$, and by $I=\ri{C}$ its  relative interior. We define $(\mu, \nu)$ to be \emph{irreducible}
\cite[Def.~1.2]{BBST23} if for any Borel sets $A, B$ such that $ \mu(A) > 0, \nu (B) > 0,$ there is $\pi \in \MT(\mu,\nu)$ with $\pi [A \times B] >0$; intuitively,  this means that $\pi$ transports positive mass from $A$ to $B$. 
In the rest of this section we assume that  $(\mu, \nu)$ is irreducible. In particular this implies that $\pi^{SBM}$ is a Bass martingale \cite[Theorem 1.3]{BBST23}, the dual problem \eqref{eq: dual opt problem} admits a lower semicontinuous solution $\dopt$   which satisfies $\mu(\ri{\psi<\infty})=1$ \cite[Theorem 7.6]{BBST23}, and this is unique modulo  affine functions \cite[Definition 7.14, Lemma 7.19]{BBST23}.
Moreover, there exists \cite[Theorem 7.8 and Propositions 7.13 and 7.20]{BBST23} a dual optimising sequence $\psi_{n}\geq 0, n\in \bN$ such that $\sup _{n} \psi_n<+\infty$ on $I$ and which belongs to the  space  $\Caff$ of continuous test functions which satisfy a convenient quadratic growth condition defined in \cite[Eq.~(2.1)]{BBST23}, and surprisingly for any such $(\psi_{n})_n$ there exists affine functions $(a_n)_n$ such that the sequence $(\psi_{n}+a_n)_n$ (which is also dual optimising) converges pointwise in $I \cup C^c$ to $\dopt \geq 0$, and $I \subseteq \{\dopt<\infty\} \subseteq C$.
The fact that $\left(\psi_{n}\right)_{n}$ is  dual optimising and $\dopt$ a dual optimiser means that
\begin{align}
\label{eq: dual is solvable}
\inf_{\substack{\mu(\psi<\infty)=1 \\ \psi \text { convex} }} \cD(\psi)=\lim_n \cD(\psi_n) =\cD(\dopt).
\end{align}

We can now state our main result, which refines the above convergence results by considering  the behaviour of   $\left(\psi_{n}\right)_{n}$  also on the relative boundary of $C$, on which $\nu$ may very well put strictly positive mass.
  We denote with $L^0(\nu)$ the space of $\nu$-equivalence classes of \emph{real-valued} functions on $\bR^d$, equipped with the convergence in $\nu$-measure. 
Given $V\subseteq \bR^d$, the notation $K \Subset V$ means that $K$ is a compact subset of $V$. 

\begin{theorem}
\label{prop: dual opt conv in proba}
Given an  irreducible pair $\mu \leqc \nu$ in $\cP_{2}(\bR^{d})$ define $C:=\cco{\spt{\nu}}$ and $I:=\ri{C}$. 
Let $(\psi_n)_n$ be a dual optimising sequence and $\dopt$ a dual optimiser, i.e.~$\psi_n,\dopt: \bR^d \to (-\infty,\infty], n\in \bN,$ are convex and $\mu$ a.s.~finite and such that \cref{eq: dual is solvable} holds. Assume w.l.o.g.~that $(\psi_n)_n$ are positive and converges pointwise on $I \cup C^c$ to  $\dopt \geq 0$, and $\dopt$ is lower semicontinuous and satisfies $\mu(\ri{\dopt<\infty})=1$.
Then $\dopt \in L^0(\nu)$ (i.e.~$\dopt<\infty$, $\nu$ a.s.), $\left(\psi_{n}\right)_{n}$ converges to  $\dopt$ in $L^0(\nu)$, and
\begin{align}
\label{eq: liminf psi_n >= dopt}
\liminf _{n \rightarrow \infty} \psi_{n}(y) \geq \dopt(y) \quad \text{ for all } y \in \bR^d.
\end{align}
Moreover, if $\spt{\mu}\Subset I$ then  $\dopt \in L^1(\nu)$ and $(\psi_n)_n$ converges to $\dopt$ in $L^1(\nu)$.
\end{theorem}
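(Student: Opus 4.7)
The plan is to prove the three assertions of the theorem in order, and then extract the $L^1(\nu)$ refinement at the end. For the liminf bound \eqref{eq: liminf psi_n >= dopt}, the case $y \in I \cup C^c$ is the assumed pointwise convergence. For $y$ in the relative boundary $C \setminus I$, I would fix any $z \in I$ (so that $\dopt(z) \in \bR$) and set $y_\lambda := \lambda y + (1-\lambda) z$, which lies in $I$ for $\lambda \in [0, 1)$ by the standard relative-interior property. Convexity of $\psi_n$ gives $\psi_n(y_\lambda) \leq \lambda \psi_n(y) + (1-\lambda) \psi_n(z)$, hence $\liminf_n \psi_n(y) \geq [\dopt(y_\lambda) - (1-\lambda) \dopt(z)]/\lambda$ (using pointwise convergence on $I$); letting $\lambda \to 1^-$ and invoking lower semi-continuity of $\dopt$ at $y$ concludes. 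For the $\nu$-a.s.~finiteness of $\dopt$, the identity $\cD(\dopt) = D(\mu,\nu) < \infty$ combined with \eqref{eq: varphi and int psi finite} yields $\int \dopt\, d\pi_x < \infty$ for $\mu$-a.e.~$x$ (for any $\pi \in \MT(\mu,\nu)$), so $\pi_x\{\dopt = \infty\} = 0$ $\mu$-a.s., and $\nu\{\dopt = \infty\} = \int \pi_x\{\dopt = \infty\}\, d\mu = 0$.

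The heart of the argument is convergence in $\nu$-measure. I would specialize to $\pi = \pi^{SBM}$ and introduce the non-negative quantity $G_\psi(x) := \int \psi\, d\pi^{SBM}_x - \varphi^\psi(x)$. Since $\pi^{SBM}_x$ attains the infimum defining $\varphi^\dopt(x)$, one has $G_\dopt(x) = \mcov(\pi^{SBM}_x, \gamma)$, while taking $p = \pi^{SBM}_x$ as a competitor in the infimum defining $\varphi^{\psi_n}(x)$ gives $G_{\psi_n}(x) \geq G_\dopt(x)$. Combined with $\int G_{\psi_n}\, d\mu = \cD(\psi_n) \to \cD(\dopt) = \int G_\dopt\, d\mu$, this forces $G_{\psi_n} \to G_\dopt$ in $L^1(\mu)$, and hence $\mu$-a.s.\ pointwise along a subsequence.

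The main obstacle will then be to promote this scalar convergence into $\nu$-measure convergence of $\psi_n \to \dopt$. My approach is to first establish $\limsup_n \varphi^{\psi_n}(x) \leq \varphi^\dopt(x)$ $\mu$-a.s.\ (noting that $\ri{\dopt<\infty} = I$, so $\mu(I) = 1$). For $\mu$-a.e.~$x \in I$ I would approximate $\pi^{SBM}_x$ (which may charge $C \setminus I$) by measures $p_k \in \cP_2^x$ compactly supported in $I$, built by (i) truncating $\pi^{SBM}_x$ to a ball of radius $R$ and renormalising, (ii) adding a small atom at some $z_R \in I$ near $x$ to restore the barycentre $x$, and (iii) applying the contraction $T_\epsilon(y) := (1-\epsilon) y + \epsilon x$, which maps $C$ into $I$ whenever $x \in I$. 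On each such compactly supported $p_k$, the locally uniform boundedness of $(\psi_n)$ on compacts of $I$ (pointwise convergence on $I$ implies pointwise finiteness of $\sup_n \psi_n$ there, whence local boundedness by convexity) combined with pointwise convergence give $\int \psi_n\, dp_k \to \int \dopt\, dp_k$ by dominated convergence, so $\limsup_n \varphi^{\psi_n}(x) \leq \int \dopt\, dp_k - \mcov(p_k, \gamma)$; sending $\epsilon \to 0$ and $R \to \infty$, with $\wass$-continuity of $\mcov$ and dominated convergence for $\dopt \in L^1(\pi^{SBM}_x)$ (which uses convexity of $\dopt$ together with its lower semi-continuity to obtain $\dopt(T_\epsilon(y)) \to \dopt(y)$ for $\pi^{SBM}_x$-a.e.~$y$), yields $\limsup_n \varphi^{\psi_n}(x) \leq \varphi^\dopt(x)$. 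Combining this with $G_{\psi_n}(x) \to G_\dopt(x)$ $\mu$-a.s.\ and Fatou's lemma applied to $\liminf_n \int \psi_n\, d\pi^{SBM}_x \geq \int \dopt\, d\pi^{SBM}_x$, all involved inequalities collapse to equalities, forcing $\int \psi_n\, d\pi^{SBM}_x \to \int \dopt\, d\pi^{SBM}_x$ $\mu$-a.s. Since $(\psi_n - \dopt)^- \to 0$ pointwise and is dominated by $\dopt \in L^1(\pi^{SBM}_x)$, dominated convergence gives $\psi_n \to \dopt$ in $L^1(\pi^{SBM}_x)$ $\mu$-a.s., and integrating in $\mu$ yields $\nu$-measure convergence along the subsequence; the subsequence principle then extends this to the full sequence.

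For the $L^1(\nu)$ refinement under $\spt\mu \Subset I$, compactness of $\spt\mu$ inside $I$ makes $\varphi^\dopt$ bounded on $\spt\mu$, which together with $\int \mcov(\pi^{SBM}_x, \gamma)\, d\mu < \infty$ gives $\dopt \in L^1(\nu)$ via Fubini. Uniform bounds on $\psi_n$ on a compact neighborhood of $\spt\mu$ inside $I$, combined with the already-established convergence in measure, the liminf \eqref{eq: liminf psi_n >= dopt}, and $\int \psi_n\, d\nu \to \int \dopt\, d\nu$ (derived from $\cD(\psi_n) \to \cD(\dopt)$ together with $\int \varphi^{\psi_n}\, d\mu \to \int \varphi^\dopt\, d\mu$) then upgrade this to $L^1(\nu)$ convergence.
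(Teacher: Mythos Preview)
Your approach is correct and takes a genuinely different route from the paper's. Both proofs share the core observation that $G_{\psi_n}\to G_{\dopt}$ in $L^1(\mu)$ and both approximate $\pi^{SBM}_x$ by measures compactly supported in $I$ (where uniform convergence $\psi_n\to\dopt$ is available), but they diverge in how this is organised and implemented. First, your proof of \eqref{eq: liminf psi_n >= dopt} via the convexity inequality $\psi_n(y_\lambda)\le\lambda\psi_n(y)+(1-\lambda)\psi_n(z)$ and lower semicontinuity of $\dopt$ is the same segment idea as the paper's, but packaged more directly than their contradiction argument. Second, your one-line proof that $\dopt\in L^0(\nu)$ from $\cD(\dopt)<\infty$ and \eqref{eq: varphi and int psi finite} is strictly simpler than the paper's route, which only obtains $\dopt\in L^0(\nu)$ at the end, after localising $\mu$ to compacts and invoking the $L^1(\nu_j)$ statement. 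Third, the paper builds the approximating kernels $\pi^j_x$ by \emph{stopping the continuous-time Stretched Brownian Motion} on an exhausting sequence of compacts (Lemma~\ref{le: approx lemma}), which yields $\pi^j_x\leqc\pi_x$ and monotonicity of $\mcov$ for free but requires the continuous-time machinery of \cite{BBST23}; your truncate--recenter--contract construction is more elementary, though it needs a few more lines to verify (barycentre correction, $\wass$-convergence, and the dominated convergence $\dopt(T_\epsilon(y))\to\dopt(y)$ via convexity and lower semicontinuity). Fourth, and most structurally different: the paper first proves the $L^1(\nu)$ statement under $\spt\mu\Subset I$ and then deduces $L^0(\nu)$ convergence in general via a localisation lemma (Lemma~\ref{prop: SBM from B}) showing that restricting $\mu$ to $K^j$ preserves the SBM kernel, the dual optimising property of $(\psi_n)_n$, and $\nu_j\sim\nu$; you instead obtain $L^0(\nu)$ convergence directly by upgrading the $\mu$-a.e.\ convergence $\int\psi_n\,d\pi^{SBM}_x\to\int\dopt\,d\pi^{SBM}_x$ (along subsequences) to $L^1(\pi^{SBM}_x)$ convergence and integrating, then invoke the subsequence principle. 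This avoids the localisation lemma entirely at the cost of a slightly more delicate pointwise argument. One small point to make explicit in your $L^1$ step: the convergence $\int\varphi^{\psi_n}\,d\mu\to\int\varphi^{\dopt}\,d\mu$ follows by combining reverse Fatou (using $\varphi^{\psi_n}\le\psi_n\le M$ on $\spt\mu$ and your pointwise $\limsup\varphi^{\psi_n}\le\varphi^{\dopt}$) with Fatou on $\int\psi_n\,d\nu$ and the identity $\cD(\psi_n)=\int\psi_n\,d\nu-\int\varphi^{\psi_n}\,d\mu$; you have all the ingredients but should spell this out.
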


\section{Proof of Theorem \ref{prop: dual opt conv in proba}}
In this section we state some auxiliary results and prove \cref{prop: dual opt conv in proba}. To prove the  $L^1(\nu)$ convergence in \cref{prop: dual opt conv in proba} we will need the following approximation lemma, which will allow us to replace a kernel concentrated on $C=\bar{I}$ by a kernel concentrated on some $K \Subset I$; note that we know that $(\psi_n)_n$ is converging uniformly on any $K \Subset I$. 
\begin{lemma}
\label{le: approx lemma}
Let $\mu,\nu,C,I$ be as in \cref{prop: dual opt conv in proba}, and $\pi \in \MT(\mu, \nu)$ have $\mu$-disintegration $(\pi_x)_{x\in I}$, so that  $\pi_x\in \cP_2^x(\bR^d)$ and  $\spt{\pi_x}\subseteq C$ for $\mu$ a.e.. Then there exists an increasing sequence of compact convex sets $(K^j)_j \subseteq I$  and a sequence of kernels $((\pi^j_x)_{x\in I})_j$ such that $\cup_j K^j=I$, $K^j\subseteq \ri{K^{j+1}}$, $\pi^j_x\in  \cP_2^x(\bR^d)$ for $j\in \bN$ and $\mu$ a.e.~$x \in I$, and:
\begin{enumerate}
\item \label{it: cpt support}
$\spt{\pi^j_x}\subseteq K^j\,\,$   for $\mu$ a.e.~$x\in K^j$, and $\pi^j_x=\delta_x$ for $\mu$ a.e.~$x \in I \setminus K^j$, for  all $j\in \bN$,
\item $\pi^{j}_x \leqc \pi^{j+1}_x  \leqc \pi_x$  for $j\in \bN$ and $\mu$ a.e.~$x\in I$,
\item For $\mu$ a.e.~$x\in I$, as $j\to \infty$ we have $\wass(\pi^j_x, \pi_x)\to 0$ and
\begin{align}
\label{eq: mcov converge}
	0\leq \mcov\left(\pi^j_x, \gamma\right) \uparrow \mcov\left(\pi_x, \gamma\right)\leq \int \frac{1}{2}\|y\|^2 (\pi_x+\gamma)(dy) \in L^1(\mu(dx)).
\end{align}
\end{enumerate} 
\end{lemma}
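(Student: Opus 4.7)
My plan is to define $\pi^j_x$ as the pushforward of $\pi_x$ through a radial contraction toward its barycenter $x$: set $T^x_\alpha(y):=(1-\alpha)x+\alpha y$ and shrink by the largest factor $\alpha^j_x\in[0,1]$ compatible with $T^x_{\alpha^j_x}(C)\subseteq K^j$. For the exhaustion one may take, for example,
\[
K^j:=\{y\in\aff(C):\|y\|\le j,\ d(y,\aff(C)\setminus I)\ge 1/j\},
\]
which is compact and convex (the second constraint is a superlevel set of the concave function $y\mapsto d(y,\aff(C)\setminus I)$ on the open convex $I\subseteq\aff(C)$), satisfies $K^j\subseteq\ri{K^{j+1}}$ because both defining inequalities for $y\in K^j$ hold strictly at level $j{+}1$, and exhausts $I$. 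For $x\in K^j$ set $\alpha^j_x:=\sup\{\alpha\in[0,1]:T^x_\alpha(C)\subseteq K^j\}$ and $\pi^j_x:=(T^x_{\alpha^j_x})_*\pi_x$; for $x\in I\setminus K^j$ set $\pi^j_x:=\delta_x$.

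\textbf{Algebraic properties.} Item \ref{it: cpt support} is immediate, and $\bary(\pi^j_x)=x$ since $T^x_\alpha$ preserves the barycenter $x$ of $\pi_x$. For any convex $\phi$,
\[
\int\phi\,d\pi^j_x=\int\phi\bigl((1-\alpha^j_x)x+\alpha^j_x y\bigr)\,\pi_x(dy)\le(1-\alpha^j_x)\phi(x)+\alpha^j_x\!\int\phi\,d\pi_x\le\int\phi\,d\pi_x,
\]
the last step by Jensen's inequality since $\bary(\pi_x)=x$. The chain $\pi^j_x\leqc\pi^{j+1}_x$ then follows from the factorisation $T^x_\alpha=T^x_{\alpha/\beta}\circ T^x_\beta$ (valid for $0\le\alpha\le\beta\le 1$), applied with $\alpha=\alpha^j_x,\beta=\alpha^{j+1}_x$; the degenerate cases are subsumed by $\delta_x\leqc q$ for any $q\in\cP^x_2(\bR^d)$.

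\textbf{Convergence.} The crux is that $\alpha^j_x\to 1$ for every $x\in I$: since $x\in\ri{C}$, the standard line segment principle for relative interior gives $T^x_\alpha(C)\subseteq I$ for every $\alpha\in[0,1)$, and this compact set is contained in $K^j$ for $j$ large, so $\alpha^j_x\ge\alpha$ eventually. The canonical coupling $(T^x_{\alpha^j_x}(Y),Y)$ with $Y\sim\pi_x$ yields $\wass^2(\pi^j_x,\pi_x)\le(1-\alpha^j_x)^2\!\int\|y-x\|^2\pi_x(dy)\to 0$. For $\mcov$, using that $\gamma$ has mean zero, pushing any coupling of $(\pi_x,\gamma)$ through $T^x_{\alpha^j_x}\otimes\mathrm{id}$ gives $\mcov(\pi^j_x,\gamma)\ge\alpha^j_x\mcov(\pi_x,\gamma)$; the reverse $\mcov(\pi^j_x,\gamma)\le\mcov(\pi^{j+1}_x,\gamma)\le\mcov(\pi_x,\gamma)$ follows from $\pi^j_x\leqc\pi^{j+1}_x\leqc\pi_x$ together with the monotonicity of $\mcov(\cdot,\gamma)$ in convex order---immediate from the Kantorovich dual $\mcov(p,\gamma)=\inf_\psi\bigl(\int\psi^*dp+\int\psi d\gamma\bigr)$ since the Legendre transform $\psi^*$ is always convex. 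The bound in \eqref{eq: mcov converge} is Cauchy--Schwarz $\langle x_1,x_2\rangle\le\tfrac12(\|x_1\|^2+\|x_2\|^2)$, and its $\mu$-integrability follows from $\int\!\int\|y\|^2\pi_x(dy)\mu(dx)=\int\|y\|^2\nu(dy)<\infty$.

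\textbf{Main obstacle.} The sole delicate technical point is joint Borel measurability of $x\mapsto\alpha^j_x$, needed to ensure $(\pi^j_x)_x$ is a bona fide kernel. This follows from the observation that $F(x,\alpha):=\max_{y\in C}d(T^x_\alpha(y),K^j)$ is jointly continuous (by compactness of $C$) and that, by convexity of $K^j$, the slice $\{\alpha:F(x,\alpha)=0\}$ is a subinterval of $[0,1]$ containing $0$ precisely when $x\in K^j$; hence $\{\alpha^j_x\ge\beta\}=\{F(\cdot,\beta)=0\}$ is closed for each $\beta>0$, so $x\mapsto\alpha^j_x$ is upper semicontinuous and a fortiori Borel.
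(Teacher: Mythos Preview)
Your radial-contraction construction is clean and, under the tacit assumption that $C=\cco{\spt{\nu}}$ is compact, yields a correct and considerably more elementary proof than the paper's. The paper instead takes $(M^x_t)_{t\in[0,1]}$ to be the continuous-time Stretched Brownian Motion from $\delta_x$ to $\pi_x$, sets $\tau^j_x:=\inf\{t:M^x_t\notin K^j\}\wedge 1$, and defines $\pi^j_x:=\text{Law}(M^x_{\tau^j_x})$; optional stopping then gives the barycenter condition and the convex ordering simultaneously, and $\tau^j_x\to 1$ a.s.\ (from \cite[Corollary~6.8]{BBST23}) together with Doob's $L^2$ inequality gives the convergence.

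However, nothing in the hypotheses forces $C$ to be bounded---for instance $\nu$ Gaussian gives $C=I=\bR^d$---and in the unbounded case your construction collapses. For any $\alpha>0$ the map $T^x_\alpha$ is an invertible affine transformation, so $T^x_\alpha(C)$ is again unbounded and cannot lie in the compact $K^j$; hence $\alpha^j_x=0$ for every $j$ and every $x\in K^j$, and your $\pi^j_x\equiv\delta_x$, which fails item~3 whenever $\pi_x\ne\delta_x$. You invoke compactness of $C$ explicitly twice: in the convergence step (``this compact set is contained in $K^j$ for $j$ large'') and in the measurability step (``$F(x,\alpha):=\max_{y\in C}\ldots$ is jointly continuous by compactness of $C$''). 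Replacing $C$ by $\spt(\pi_x)$ in the definition of $\alpha^j_x$ does not help, since $\spt(\pi_x)$ may itself be unbounded.

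This is a structural obstruction rather than a patchable detail: any map carrying an unbounded set into a compact one is nonlinear, and nonlinearity will in general destroy both the barycenter preservation and the Jensen-type inequality underpinning $\pi^j_x\leqc\pi_x$. The paper's stopping-time device sidesteps the issue precisely because the stopped continuous martingale is $K^j$-valued by construction (for $x\in K^j$) irrespective of whether $\pi_x$ has compact support.
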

To prove the convergence in measure in \cref{prop: dual opt conv in proba} we will use the statement about $L^1(\nu)$ convergence in \cref{prop: dual opt conv in proba}, plus the localisation procedure described in the next lemma, which is of independent interest; note that an analogous statement holds (with analogous proof) if $\mu^B$ is replaced by any probability $\mu'\ll \mu$ with bounded density $\frac{d \mu'}{d\mu}$.
\begin{lemma}
\label{prop: SBM from B}
Given  $\mu \leqc \nu$ in $\cP_{2}(\bR^{d})$, let $\pi^{SBM}$ be the Stretched Brownian Motion between $\mu$ and $\nu$ and  $(\pi_x^{SBM})_x$ be its $\mu$-disintegration. Given a Borel  set $B\subseteq \bR^d$ with $\mu(B)>0$, define 
\begin{align}
\label{}
\mu^B:=\frac{\mu(B \cap \cdot)}{\mu(B)},  \quad \nu^B:=\int \mu^B(dx) \pi_x^{SBM} , \quad \pi^B(dx,dy):=\mu^B(dx) \pi_x^{SBM}(dy).
\end{align}
  Then the Stretched Brownian Motion between $\mu^B$ and $\nu^B$ is $\pi^B$. Let $\psi_n, n\in \bN$ be convex and $\mu$ a.s.~finite; if  $(\psi_n)_{n\in \bN}$ is a dual optimising sequence for $(\mu,\nu)$ then it is a dual optimising sequence for  $(\mu^B,\nu^B)$. Moreover, if $(\mu,\nu)$ are irreducible then so are $(\mu^B,\nu^B)$ (equivalently, if $\pi^{SBM}$ is a Bass martingale then so is $\pi^B$), $\nu_B\sim \nu$ holds (so in particular $\cco{\spt{\nu_B}}=\cco{\spt{\nu}}$), and the  dual optimiser  $\dopt$ for $(\mu,\nu)$  is a dual optimiser for $(\mu^B,\nu^B)$.
\end{lemma}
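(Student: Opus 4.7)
The plan is to tackle the four claims in sequence; the primal and dual assertions are direct concatenation/weak-duality manipulations, while irreducibility and $\nu^B\sim\nu$ rely on the Bass-martingale structure.

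\textbf{Primal optimality of $\pi^B$.} To show $\pi^B$ is the SBM between $\mu^B$ and $\nu^B$, I would argue by concatenation: for any competitor $\pi'\in\MT(\mu^B,\nu^B)$ with $\mu^B$-disintegration $(\pi'_x)_x$, define
\[
\tilde\pi(dx,dy):=\ind_B(x)\,\mu(dx)\,\pi'_x(dy)+\ind_{B^c}(x)\,\mu(dx)\,\pi^{SBM}_x(dy).
\]
A direct check shows $\tilde\pi\in\MT(\mu,\nu)$: the second marginal is $\mu(B)\nu^B+\int_{B^c}\pi^{SBM}_x\,\mu(dx)=\int\pi^{SBM}_x\,\mu(dx)=\nu$. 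Primal optimality of $\pi^{SBM}$ via \eqref{eq: no duality gap} then yields, after cancelling the common $B^c$-contributions and dividing by $\mu(B)$, that $\pi^B$ is primally optimal for $(\mu^B,\nu^B)$; uniqueness of SBM concludes.

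\textbf{Dual convergence.} The driving inequality is $\int\psi\,d\pi^{SBM}_x-\varphi^\psi(x)\ge\mcov(\pi^{SBM}_x,\gamma)$ $\mu$-a.e., obtained by plugging $p=\pi^{SBM}_x$ into \eqref{eq: def varphi}. Thus the non-negative functions
\[
f_n(x):=\int\psi_n\,d\pi^{SBM}_x-\varphi^{\psi_n}(x)-\mcov(\pi^{SBM}_x,\gamma)
\]
satisfy $\int f_n\,d\mu=\cD(\psi_n)-D(\mu,\nu)\to 0$ by \eqref{eq: no duality gap}, so $\int_B f_n\,d\mu\to 0$; dividing by $\mu(B)$ and using the previous step together with \eqref{eq: no duality gap} applied to the convex-ordered pair $(\mu^B,\nu^B)$ shows $(\psi_n)_n$ is dual optimising for $(\mu^B,\nu^B)$. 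The same calculation with $\dopt$ in place of $\psi_n$, combined with $\cD(\dopt)=D(\mu,\nu)$ (which forces $f\equiv 0$ $\mu$-a.e.), shows $\dopt$ is a dual optimiser for $(\mu^B,\nu^B)$; it remains $\mu^B$-a.s.\ finite since $\mu^B\ll\mu$.

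\textbf{Bass structure, irreducibility and $\nu^B\sim\nu$.} Under irreducibility, $\pi^{SBM}$ is a Bass martingale \cite[Theorem 1.3]{BBST23}: for $\mu$-a.e.\ $x$, the kernel $\pi^{SBM}_x$ is the push-forward under a fixed Borel map $\nabla\psi$ of a non-degenerate Gaussian whose mean depends on $x$ but whose covariance does not. Since non-degenerate Gaussians on $\bR^d$ are mutually equivalent and push-forward by a fixed map preserves null sets, the family $\{\pi^{SBM}_x\}_x$ is mutually equivalent; hence both $\nu$ and $\nu^B$ are equivalent to any single $\pi^{SBM}_{x_0}$, and thus to each other. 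Moreover $\pi^B$ inherits the same Bass structure (same $\nabla\psi$ and Gaussian kernels, only the base measure changes from $\mu$ to $\mu^B$), so $\pi^B$ is a Bass martingale, equivalently $(\mu^B,\nu^B)$ is irreducible. The main obstacle I expect is exactly this step: pinning down the precise form of $\pi^{SBM}_x$ from \cite{BBST23} and justifying the mutual equivalence of $\{\pi^{SBM}_x\}_x$ — which simultaneously delivers $\nu^B\sim\nu$ and inheritance of the Bass property — is the only genuinely structural point; the preceding steps are essentially routine.
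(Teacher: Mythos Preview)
Your proposal is correct, and for the primal and dual claims it matches the paper almost line for line: the paper uses exactly your concatenation $\tilde\pi$ for primal optimality, and it packages your $f_n$-argument as a separate lemma stating that $(\psi_n)_n$ is dual optimising iff $\|L(\psi_n)-\mcov(\pi^{SBM}_\cdot,\gamma)\|_{L^1(\mu)}\to 0$, after which passing from $\mu$ to $\mu^B\ll\mu$ is immediate.

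The only genuine divergence is in the irreducibility/$\nu^B\sim\nu$ block. The paper does not unpack the Bass kernel structure: for $\nu^B\sim\nu$ it simply quotes \cite[Corollary~7.7]{BBST23}, which already gives $\pi^{SBM}_x\sim\nu$ for $\mu$-a.e.\ $x$, and for irreducibility of $(\mu^B,\nu^B)$ it observes $\cco{\spt{\nu^B}}=\cco{\spt{\nu}}$ and invokes \cite[Theorem~D.1]{BBST23}. Your route---mutual equivalence of the kernels $\pi^{SBM}_x=(\nabla\psi)_\#N(T(x),\mathrm{Id})$ via equivalence of translated Gaussians under a fixed push-forward, and direct inheritance of the Bass form by $\pi^B$ through restricting $\alpha$---is more hands-on and essentially re-derives the content of those citations in this special case. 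The paper's argument is shorter if one accepts the black boxes; yours makes the mechanism transparent and bypasses the separate irreducibility criterion, at the cost of having to pin down the exact Bass kernel formula from \cite{BBST23}, which you correctly flag as the one structural step.
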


To combine \cref{le: approx lemma,prop: SBM from B} we will need the following  result.
\begin{lemma}
\label{le: limit in proba on subsets}
Let $\mu\in \cP(\bR^{d})$, and $\mu(I^c)=0$ for some Borel set $I \subseteq \bR^d$. 
Let $g,g_n,h,h_n:I \to \bR$ be Borel functions, and $(I_j)_{j\in \bN}$ be Borel subsets of $\bR^d$ such that 
$\mu(I \setminus \cup_{j} I_j)=0$ and $\mu(I_j)>0$ for all $j$. Define $\mu_{j}:=\mu(I_j \cap \cdot)/\mu(I_j) \in \cP(\bR^{d})$. Then:
\begin{enumerate}
\item \label{it: mu}   $g_n\to g$ in $L^0(\mu)$ if and only if  $g_n\to g$ in $L^0(\mu_j)$ for all $j\in \bN$.
\item \label{it: nu} Given a martingale kernel  $(\pi_x)_x$, define  
$$\textstyle \nu:=\int \mu(dx) \pi_x, \quad \nu_j:=\int \mu_j(dx) \pi_x \quad \in \cP_{1}(\bR^{d}),$$  then  $h_n\to h$ in $L^0(\nu)$ if and only if  $h_n\to h$ in $L^0(\nu_j)$ for all $j\in \bN$.
\end{enumerate} 
\end{lemma}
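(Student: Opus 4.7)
The plan is to prove parts (\ref{it: mu}) and (\ref{it: nu}) in parallel, since they share the same structure. The key observation is that $\mu_j \leq \mu/\mu(I_j)$ as measures (by the very definition of $\mu_j$), and consequently, by integrating the kernel $(\pi_x)_x$ against this inequality, one also has $\nu_j \leq \nu/\mu(I_j)$. Since convergence in measure only involves the probabilities of the sets $\{|g_n-g|>\epsilon\}$ and $\{|h_n-h|>\epsilon\}$ for $\epsilon>0$, the whole lemma reduces to manipulating these set-probabilities under $\mu,\mu_j,\nu,\nu_j$.

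The ``only if'' direction is immediate: the relation $\mu_j\leq \mu/\mu(I_j)$ gives $\mu_j(|g_n-g|>\epsilon)\leq \mu(|g_n-g|>\epsilon)/\mu(I_j)\to 0$ for every $\epsilon>0$, and the analogous estimate for $\nu_j$ follows verbatim.

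The ``if'' direction is the substantive step, and the obstacle is that the $I_j$'s need not be disjoint and $\sum_j\mu(I_j)$ may be infinite, so one cannot directly dominate $\mu$ by a linear combination of $\mu_1,\ldots,\mu_N$. I would handle this by the standard disjointification $J_j:=I_j\setminus \bigcup_{k<j}I_k$: then $J_j\subseteq I_j$, the $J_j$'s are pairwise disjoint, and $\mu(\bigcup_j J_j)=\mu(\bigcup_j I_j)=1$, so $\sum_j\mu(J_j)=1$. Given $\delta>0$, pick $N$ with $\mu(\bigcup_{j\leq N}J_j)>1-\delta$, and estimate
$$\mu(|g_n-g|>\epsilon)\leq \delta+\sum_{j=1}^N \mu(J_j\cap\{|g_n-g|>\epsilon\})\leq \delta+\sum_{j=1}^N \mu(I_j)\,\mu_j(|g_n-g|>\epsilon).$$
Sending $n\to\infty$ yields $\limsup_n \mu(|g_n-g|>\epsilon)\leq \delta$, and since $\delta>0$ is arbitrary, $g_n\to g$ in $L^0(\mu)$. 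Part (\ref{it: nu}) follows by exactly the same argument, replacing $\mu(\{|g_n-g|>\epsilon\})$ by $\int \pi_x(\{|h_n-h|>\epsilon\})\,\mu(dx)$ (and analogously for $\mu_j$); the Fubini-type manipulation is routine since all integrands are nonnegative.

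I do not expect any genuine obstacle here. The only nontrivial ingredient is the disjointification, which removes the need to control $\sum_j\mu(I_j)$; once that is in place, the proof is essentially a matter of bookkeeping.
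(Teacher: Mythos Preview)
Your proof is correct and essentially matches the paper's approach. The paper declares part~(\ref{it: mu}) ``well known'' and then reduces part~(\ref{it: nu}) to part~(\ref{it: mu}) by observing that $h_n\to h$ in $L^0(\beta)$ (where $\beta=\int\theta(dx)\pi_x$) is equivalent to the bounded functions $v^\epsilon_n(x):=\pi_x(|h_n-h|>\epsilon)$ converging to $0$ in $L^1(\theta)$, hence in $L^0(\theta)$; applying this with $\theta=\mu$ and $\theta=\mu_j$ gives the equivalence via part~(\ref{it: mu}). Your treatment of part~(\ref{it: nu}) is exactly this reduction spelled out, and your disjointification argument supplies the proof of part~(\ref{it: mu}) that the paper omits.
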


The proof of \cref{le: approx lemma,prop: SBM from B,le: limit in proba on subsets} are postponed to section \ref{se: proofs of lemmas}.

\begin{proof}[Proof of \cref{prop: dual opt conv in proba}.]   We can assume w.l.o.g.~that the affine hull of the support of $\nu$ equals $\bR^d$ \cite[Assumption 3.1 and text that follows]{BBST23}, and so $I$ is open. 
Recall that if $\cD(\psi)<\infty$ then \cref{eq: varphi and int psi finite} holds, and so $\varphi^{\psi},\int \psi d \pi_\cdot \in L^0(\mu)$; for this reason we assume that $\cD(\psi_n)<\infty$ for all $n$, which we can do w.l.o.g.~since $\cD(\psi_n)\to \cD(\dopt)<\infty$.

\medskip 

\emph{We now prove \cref{eq: liminf psi_n >= dopt}}. Assume by contradiction that \cref{eq: liminf psi_n >= dopt} fails at some point $y$; clearly $y \in C\setminus I$, since by assumption  $\left(\psi_{n}(y)\right)_{n}\to \dopt(y)$ for all $y \in I \cup C^c=(C\setminus I)^c$.  Choose a $x_0\in I$; by restricting all functions to the segment from $y$ to $x_0$ (i.e.~by replacing $\bR^d \ni z\mapsto f(z)$ with $[0,1] \ni t\mapsto f(tx_0+(1-t)y)$) we can assume w.l.o.g.~that $d=1, y=0,x_0=1,I=(0,a)$ for some $a>1$. 
 We focus on the case $\psi_{\textnormal{lim}}(0) < \infty$, and leave the case $\psi_{\textnormal{lim}}(0)=\infty$ to the reader.
 As we assumed that  \cref{eq: liminf psi_n >= dopt} fails at $y$, by passing to a  subsequence (without relabelling)  we get that $\epsilon:=\dopt(0)-\lim_{n \rightarrow \infty} \psi_{n}(0)>0$ and so 
for all big enough $n\in \bN$
\begin{align}
\label{eq: psi_n small in the sides}
 \psi_{n}(0)\leq \dopt(0)-\frac{3\epsilon}{4}.
\end{align}
 Since $\dopt$ is lower semicontinuous, we have $\dopt(x)\geq \dopt(0)-\frac{\epsilon}{4}$ for all $x\in (0,\delta)$ for some $\delta \in (0,1)$. 
Thus  for all $x\in (0,\delta)$ for all big enough $n\geq N(x)\in \bN$ we have
\begin{align}
\label{eq: psi_n big in the middle}
 \psi_{n}\left(x\right)\geq \dopt(0)-\frac{\epsilon}{2} .
\end{align}
If follows from \cref{eq: psi_n small in the sides,eq: psi_n big in the middle} that the slope $\frac{\psi_n(x)-\psi_n(0)}{x-0}$ of $\psi_n$ between $0$ and $x$ 
is bounded below by $\frac{\epsilon}{4x}$. Since $\psi_n$ is convex, such slope is bounded above by the left-derivative $\psi_n'(x-)$ of $\psi_n$ at $x$, and so $\frac{\epsilon}{4x} \leq \psi_n'(x-)$ for all $n\geq N(x)$. From this and \cref{eq: psi_n big in the middle}, using the convexity of $\psi_n$  we conclude that 
$$\liminf_n \psi_n(1) \geq \liminf_n  \psi_n(x) +\psi_n'(x-)(1-x)\geq \left(\dopt(0)-\frac{\epsilon}{2} \right)+ \frac{\epsilon}{4x}(1-x) .$$ 
Taking $\lim_{x \downarrow 0}$ gives  $\liminf_n \psi_{n}(1)= \infty$, contradicting $\psi_{n}(1)\to \dopt(1) \in \bR$. Thus, \cref{eq: liminf psi_n >= dopt} holds.

\medskip

\emph{We now prove that $\dopt \in L^1(\nu)$ if $\spt{\mu}\Subset I$.} Let $\pi^{SBM}$ be the Stretched Brownian Motion $\pi^{SBM}$ from $\mu$ to $\nu$ and  $(\pi_x^{SBM})_x$ be its disintegration with respect to $\mu$. By \cite[Lemma 7.9]{BBST23} and Fatou's lemma 
$$0 \leq A:=\int \int \left(\dopt(y)-\dopt(x)\right) \pi_x^{SBM}(d y) \mu(d x) <\infty,$$
i.e.~
\begin{align*}
\textstyle
  \int \int \dopt(y) \pi_x^{SBM}(d y) \mu(d x) \leq A+ \int \int \left(\dopt(x)\right) \pi_x^{SBM}(d y) \mu(d x),
\end{align*}
or equivalently $\int \dopt d\nu \leq  A+ \int \dopt d \mu$. So, from the fact that  $\dopt$ is continuous and finite on $I$, and thus bounded on the compact set $\spt{\mu} \subseteq I$, we conclude $\dopt \in L^1(\nu)$.

\medskip

\emph{We now prove that $(\psi_n)_n \to \dopt$ in $L^1(\nu)$ if $\spt{\mu}\Subset I$.}
Since  $\dopt, \psi_{n}\geq 0$, we get
$$0\leq (\psi_{n}- \dopt)^{-}\leq \dopt,$$
and as \cref{eq: liminf psi_n >= dopt} is equivalent to $(\psi_{n}- \dopt)^{-}(y)\to 0$ for all $y\in \bR^d$,  since $\dopt \in L^1(\nu)$ by dominated convergence we conclude that $(\psi_{n} - \dopt)^{-}\to 0$ in $L^1(\nu)$. Thus, to prove $(\psi_n)_n \to \dopt$ in $L^1(\nu)$
 it suffices to show that $\int \psi_{n} d \nu\to \int \dopt d \nu$. Since \cref{eq: liminf psi_n >= dopt} and Fatou's lemma imply that 
$\liminf_{n} \int \psi_n d\nu \geq \int \psi_{\textnormal{lim}} d\nu$, it suffices to show that
\begin{equation}\label{eq: limsup}
\limsup_{n} \int \psi_n d\nu \leq \int \psi_{\textnormal{lim}} d\nu.
\end{equation}
Since we assumed that $(\psi_n)_n$ is a dual optimising sequence, i.e.~ 
\begin{equation}\label{eq: optimising}
 \int \psi_n d\nu - \int \varphi^{\psi_n} d\mu = \cD(\psi_n) \to  \cD(\dopt)= \int \psi_{\textnormal{lim}} d\nu - \int \varphi^{\dopt} d \mu, 
\end{equation}
to prove \cref{eq: limsup} it remains to show that
\begin{equation}\label{eq: 2nd limsup}
\limsup_{n} \int\varphi^{\psi_n} d \mu \leq \int \varphi^{\psi_{\textnormal{lim}}} d \mu.
\end{equation}
For $\pi_x:=\pi_x^{SBM}$, let $K^j,\pi^j_x$ be as in \cref{le: approx lemma}. Since $\pi^j_x\in  \cP_2^x(\bR^d)$, the definition of $\varphi^{\psi}$ gives 
\begin{align}
\label{eq: psi_n bdd above}
\varphi^{\psi_n} \leq 	\int \psi_n d \pi^j_\cdot - \mcov(\pi^j_\cdot, \gamma) \quad  \mu \text{ a.e.}.
\end{align}
Since $(\psi_n)_n$ are convex and $\psi_n\to \psi<\infty$ on $I$, the convergence $\psi_n\to \psi$ is uniform on compacts \cite[Theorem 3.1.4]{HiLe01}. It follows from   \cref{it: cpt support} of \cref{le: approx lemma} that the support of $\nu^j:=\int\mu(dx) \pi^j_x$ satisfies $\spt{\nu^j} 
\subseteq K^j \cup \spt{\mu}$, and thus it is compact. 
Thus we get 
\begin{align} 
\label{eq: psi_n pi^j}
\lim_n \int \left( \int \psi_n d \pi^j_\cdot \right) d\mu =\lim_n \int \psi_n d \nu^j= \int \psi d\nu^j=\int \left( \int \psi d \pi^j_\cdot  \right) d\mu .
\end{align}
Since $\pi^{j}_x  \leqc \pi_x$ gives $\int \left(\int \psi d \pi^j_\cdot\right) d\mu  \leq \int \left( \int \psi d \pi_\cdot \right) d\mu$, integrating \cref{eq: psi_n bdd above}, taking $\limsup_n $ and using \cref{eq: psi_n pi^j} we get that
\begin{align} 
\label{eq: phipsi_n bdd by phipsi}
\limsup_n \int \varphi^{\psi_n} d\mu  \leq \int \left(	\int \psi d \pi_\cdot - \mcov(\pi^j_\cdot, \gamma)\right) d\mu .
\end{align}
By dominated convergence it follows from \cref{eq: mcov converge} that $\mcov\left(\pi^j_\cdot, \gamma\right) \to \mcov\left(\pi_\cdot, \gamma\right)$ in $L^1(\mu)$, 
and so taking $\lim_j$ of \cref{eq: phipsi_n bdd by phipsi} 
we conclude that \cref{eq: 2nd limsup} holds, since $\pi_x$ is the solution of the minimisation problem \eqref{eq: def varphi} for $\mu$ a.e.~$x\in I$ when $\psi=\dopt$ \cite[Section 5, sentence after eq.~(5.2)]{BBST23}. 
We have thus proved that $(\psi_n)_n \to \dopt \in L^1(\nu)$ if $\spt{\mu}$ is compact.

\medskip

\emph{We now prove that $\dopt \in L^0(\nu)$ and $(\psi_n)_n\to \psi_{\textnormal{lim}}$ in $L^0(\nu)$.}
For $\pi_x:=\pi_x^{SBM}$, let $K^j$ be as in \cref{le: approx lemma}, and define  
$$\mu_{j}:=\mu(\cdot| K^j):=\frac{\mu(K^j \cap \cdot)}{\mu(K^j)},  \quad \nu_j:=\int\mu_j(dx) \pi_x^{SBM} .$$
For any $j\in \bN$, by \cref{prop: SBM from B} $\dopt$ is a dual optimiser and $(\psi_n)_n$ is a dual optimising sequence  also for $(\mu_j,\nu_j)$. Moreover, $(\mu_j,\nu_j)$ satisfy  $\spt{\mu_j}=K^j\Subset I= \ri{\cco{\spt{\nu_j}}}$ and $\nu_j \sim \nu$. 
Thus from the previously proved statements we get that $\dopt \in L^1(\nu_j)$, and so $\dopt$ is finite $\nu_j$ a.s.~and so $\nu$ a.s. (i.e.~$\dopt \in L^0(\nu)$), and  $\psi_{n} \to \dopt $ in $L^1(\nu_j)$, and so $\psi_{n} \to \dopt $  in $L^0(\nu_j)$, and so by  \cref{le: limit in proba on subsets}  $\psi_{n} \to \dopt $  in $L^0(\nu)$.
\end{proof}

\section{Proofs of lemmas}
\label{se: proofs of lemmas}

In this section we first present the  proof of \cref{le: limit in proba on subsets}, and then prove \cref{le: approx lemma,prop: SBM from B} with the aid the additional lemmas \ref{prop: strassen extension} and \ref{prop: Mcov is increasing}.

\begin{proof}[Proof of \cref{le: limit in proba on subsets}]
 \Cref{it: mu} is well known. Let us prove  \cref{it: nu}. Let $\theta \in \cP(\bR^d)$ and $(\pi_x)_x$ be a martingale kernel,  and define  $\beta:=\int_{\bR^d}\theta(dx) \pi_x$. As $(h_n)_n\to h$ in $L^0(\theta)$ iff  $(1_{\{|h_n- h| > \epsilon\}})_n \to 0$ in $L^1(\theta)$ for all $\epsilon>0$, we get that  $(h_n)_n\to h$ in $L^0(\beta)$ iff,  for all $\epsilon>0$, the sequence  
 $$v^\epsilon_n:=\int_{\bR^d}\pi_\cdot(dy)1_{\{|h_n(y)- h(y)| > \epsilon\}}, \quad n\in \bN, $$
 converges to 0 in $L^1(\theta)$, or equivalently (since $|v_n^\epsilon|\leq 1$) in $L^0(\theta)$. Thus \cref{it: nu} follows from \cref{it: mu}  applying this fact to $\theta=\mu$ and then to $\theta=\mu_j$ with $g_n:=1_{\{|h_n- h| > \epsilon\}}, g=0$.
\end{proof}

\begin{lemma}
\label{prop: strassen extension}
If $\alpha,\beta, \zeta \in \cP_1(\bR^d)$, $\pi^1 \in \MT(\alpha,\beta)$, $\pi^2 \in \cpl(\alpha,\zeta)$,  then there exist random variables $A,B,Z$ such that $(A,B)\sim \pi^1,(A,Z) \sim \pi^2$ and $\bE[B|A,Z]=A$.
\end{lemma}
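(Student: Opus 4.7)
The plan is to construct $(A,B,Z)$ via a conditional independence coupling: build the joint law so that $B$ and $Z$ are conditionally independent given $A$. Concretely, since all of $\alpha, \beta, \zeta \in \cP_1(\bR^d)$ live on Polish spaces, I disintegrate both couplings with respect to their common first marginal $\alpha$, writing
\begin{align*}
\pi^1(da,db) = \alpha(da)\,\pi^1_a(db), \qquad \pi^2(da,dz) = \alpha(da)\,\pi^2_a(dz),
\end{align*}
where $(\pi^1_a)_a$ is a martingale kernel (i.e.\ $\pi^1_a \in \cP^a_1(\bR^d)$ for $\alpha$-a.e.\ $a$) because $\pi^1 \in \MT(\alpha,\beta)$. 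I then define the probability measure
\begin{align*}
\Pi(da,db,dz) := \alpha(da)\,\pi^1_a(db)\,\pi^2_a(dz)
\end{align*}
on $\bR^d \times \bR^d \times \bR^d$, and let $(A,B,Z)$ be the coordinate variables under $\Pi$.

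Next I verify the three required properties. The marginal of $\Pi$ on $(A,B)$ is obtained by integrating out $z$ against $\pi^2_a(dz)$, which is a probability measure, yielding exactly $\pi^1$; symmetrically the marginal on $(A,Z)$ equals $\pi^2$. For the martingale property, the structure of $\Pi$ says that, under $\Pi$, the variables $B$ and $Z$ are conditionally independent given $A$, so for any bounded Borel $f$,
\begin{align*}
\bE[f(Z)\,B \mid A] = \bE[f(Z)\mid A]\,\bE[B\mid A] = \bE[f(Z)\mid A]\cdot A,
\end{align*}
using $\bary(\pi^1_A) = A$ $\alpha$-a.s. This identity, taken over all bounded Borel $f$, is exactly $\bE[B \mid A, Z] = A$. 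Integrability of $B$ holds because $\beta \in \cP_1(\bR^d)$, so the conditional expectation is well defined.

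There is essentially no obstacle beyond checking that the disintegrations exist and are jointly measurable, which holds by the standard disintegration theorem on Polish spaces, and that $\Pi$ is a bona fide probability measure, which follows from Fubini since $a \mapsto \pi^1_a$ and $a \mapsto \pi^2_a$ are measurable kernels. The only conceptual point worth underlining is that even though $\pi^2$ may destroy any martingale structure between $A$ and $Z$, making $B$ and $Z$ conditionally independent given $A$ ensures the martingale property of $B$ with respect to the enlarged $\sigma$-algebra $\sigma(A,Z)$ is inherited from the martingale property with respect to $\sigma(A)$.
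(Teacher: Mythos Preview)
Your proof is correct and follows exactly the same construction as the paper: both define the joint law $\Pi(da,db,dz)=\alpha(da)\,\pi^1_a(db)\,\pi^2_a(dz)$ via the disintegrations of $\pi^1$ and $\pi^2$ with respect to $\alpha$. The paper simply states this construction without spelling out the verification of the marginals or the conditional-independence argument for $\bE[B\mid A,Z]=A$, which you have supplied in full.
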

\begin{proof}
One can take $(A,B,Z)$ to be any random vector whose law $\pi$ is given by 
$$\pi(da,db,dz):=\alpha(da)\pi^1_a(db)\pi^2_a(dz), \quad \text{ where } \pi^i(da,dx)=\alpha(da)\pi^i_a(dx),$$
i.e.~the kernel $(\pi^i_a)_a$ denotes the disintegration of $\pi^i$ with respect to $\alpha$ for $i=1,2$. 
\end{proof} 
\begin{lemma}
\label{prop: Mcov is increasing}
If  $\alpha,\beta,\zeta \in \cP_2(\bR^d)$ and $\alpha \leqc \beta$ then 
$$\langle \bary(\alpha), \bary(\zeta) \rangle \leq\mcov\left(\alpha, \zeta\right) \leq \mcov\left(\beta, \zeta\right)\leq \int \frac{1}{2}\|x\|^2 (\beta+\zeta)(dx)<\infty .$$
\end{lemma}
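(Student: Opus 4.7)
The statement breaks into three separate inequalities, each of which I would handle independently. The left-most and right-most bounds are routine; the only substantive point is the middle inequality $\mcov(\alpha,\zeta)\leq \mcov(\beta,\zeta)$, which is exactly where the convex order $\alpha\leqc\beta$ enters, and is the reason the previous auxiliary lemma (\cref{prop: strassen extension}) was set up.

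For the lower bound $\langle \bary(\alpha),\bary(\zeta)\rangle \leq \mcov(\alpha,\zeta)$, I would simply feed the product coupling $\alpha\otimes\zeta \in \cpl(\alpha,\zeta)$ into the supremum defining $\mcov$, since $\int \langle x,y\rangle\, d(\alpha\otimes\zeta)=\langle \bary(\alpha),\bary(\zeta)\rangle$. For the rightmost bound, for any $q\in\cpl(\beta,\zeta)$ I would use the elementary inequality $\langle x,y\rangle\leq \tfrac12\|x\|^2+\tfrac12\|y\|^2$ to get $\int \langle x,y\rangle\, dq \leq \tfrac12\int\|x\|^2\, d\beta+\tfrac12\int\|y\|^2\, d\zeta=\int \tfrac12\|x\|^2(\beta+\zeta)(dx)$, which is finite because $\beta,\zeta\in\cP_2(\bR^d)$; taking the supremum over $q$ yields the claimed bound.

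The core step is the middle inequality. My plan is the following. Fix $\varepsilon>0$ and choose $\pi^2\in\cpl(\alpha,\zeta)$ with $\int\langle a,z\rangle\, d\pi^2(a,z)\geq \mcov(\alpha,\zeta)-\varepsilon$. Because $\alpha\leqc\beta$, Strassen's theorem yields some $\pi^1\in\MT(\alpha,\beta)$. Now I invoke \cref{prop: strassen extension} to glue these two couplings: there exist random variables $(A,B,Z)$ with $(A,B)\sim\pi^1$, $(A,Z)\sim\pi^2$, and $\bE[B\mid A,Z]=A$. The point of the gluing is that $(B,Z)$ then has marginals $\beta,\zeta$, and by conditioning on $(A,Z)$ and pulling $Z$ out,
\[
\bE[\langle B,Z\rangle]=\bE\bigl[\langle \bE[B\mid A,Z],\,Z\rangle\bigr]=\bE[\langle A,Z\rangle]\geq \mcov(\alpha,\zeta)-\varepsilon.
\]
Since the law of $(B,Z)$ is an element of $\cpl(\beta,\zeta)$, this gives $\mcov(\beta,\zeta)\geq \mcov(\alpha,\zeta)-\varepsilon$, and letting $\varepsilon\downarrow 0$ concludes.

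The only potential obstacle is making sure the conditional expectation manipulation is legitimate, i.e.\ that $\langle B,Z\rangle$ is integrable (so $\bE[\langle B,Z\rangle\mid A,Z]=\langle \bE[B\mid A,Z],Z\rangle$ holds). This follows from the rightmost bound I will have already established, applied to the coupling law of $(B,Z)$, since $\beta,\zeta\in\cP_2(\bR^d)$ implies $\bE[|\langle B,Z\rangle|]\leq \tfrac12\bE\|B\|^2+\tfrac12\bE\|Z\|^2<\infty$. So everything fits together cleanly and the proof is essentially one display long once the gluing lemma is in hand.
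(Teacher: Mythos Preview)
Your proof is correct and follows essentially the same route as the paper's: the middle inequality via Strassen plus the gluing lemma \cref{prop: strassen extension} is identical in spirit (the paper fixes an arbitrary $\pi^2\in\cpl(\alpha,\zeta)$ rather than an $\varepsilon$-optimal one, obtaining the equality $\bE\langle Z,B\rangle=\bE\langle Z,A\rangle$ and then taking the supremum, but this is cosmetic), and the upper bound is the same elementary inequality $2\langle x,y\rangle\leq\|x\|^2+\|y\|^2$. The only minor difference is that for the lower bound the paper recycles the monotonicity just established, applying it to $\delta_{\bary(\alpha)}\leqc\alpha$ and then $\delta_{\bary(\zeta)}\leqc\zeta$, whereas you feed in the product coupling directly; both are one-liners.
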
 
\begin{proof}
Fix any $\pi^2 \in \cpl(\alpha,\zeta)$. By Strassen's theorem $\exists \pi^1 \in \MT(\alpha,\beta)$, so applying \cref{prop: strassen extension} yields $(A,B,Z)$ such that $A \sim \alpha, B\sim \beta,Z\sim \zeta$ and
$$\textstyle 
 \bE\langle Z,B \rangle= \bE \left(\bE[\langle Z,B \rangle|A,Z]\right)=\bE\langle Z,\bE[B|A,Z] \rangle =\bE \langle Z,A \rangle =\int \pi^2(da,dz) \langle a,z \rangle,$$
 which, since $\pi^2 \in \cpl(\alpha,\zeta)$ was arbitrary, shows that $\alpha \leqc \beta$ implies  $\mcov\left(\alpha, \zeta\right) \leq \mcov\left(\beta, \zeta\right)=\mcov\left(\zeta,\beta\right)$, which applied to $\delta_{\bary(\alpha)}\leqc \alpha$ and then to $\delta_{\bary(\zeta)}\leqc \zeta$ gives
 $$\langle \bary(\alpha), \bary(\zeta) \rangle = \mcov\left(\delta_{\bary(\alpha)}, \delta_{\bary(\zeta)}\right) \leq \mcov\left(\alpha, \delta_{\bary(\zeta)}\right) \leq \mcov\left(\alpha, \zeta\right).$$
  Finally $\mcov\left(\beta, \zeta\right)\leq\frac{1}{2} \int \|x\|^2 (\beta+\zeta)(dx)$ follows from $2\langle x,y \rangle \leq \|x\|^2 +\|y\|^2$.
\end{proof}

\begin{proof}[Proof of \cref{le: approx lemma}]
As usual, by restricting to the affine hull of $C$, we may assume w.l.o.g. (see \cite[Assumption 3.1 and text that follows]{BBST23})~that $C$ has dimension $d$, i.e.~that $I$ is open.  
Let $K^j$ denote the intersection of the closed ball of radius $j$ with the set of $x \in C$ whose distance 
$\dist(x,I^c):=\inf\{\|x-b\|: b\in I^c\}$ from $I^c$ is at least $1/j$, i.e. 
\begin{align}
\label{eq: def of dist and L}
\textstyle 
K^j:=\left\{z\in C : \dist(z,I^c) \geq  \frac{1}{j} , \quad \|z \| \leq j\right\}.
\end{align}
Then $(K^j)_j \subseteq I$ is an increasing sequence of compact convex sets such that $\cup_j K^j =I$ and $K^j\subseteq \inside{K^{j+1}}$. For $x\in I$ let $(M^x_t)_{t\in [0,1]}$ be the Stretched Brownian motion between $\delta_x$ and $\pi_x$, and define the stopping times 
$$\tau_x^j:=\inf\left\{t\in [0,1]: M_t^x \notin K^j\right\} \wedge 1, \quad j\in \bN.$$
Note that, for $x \notin K^j$, we have $\tau^j_x =0$. Since  the limit $\tau_x$ of the increasing sequence $(\tau^j_x)_j$ equals 1 a.s.~for all $x\in I$  \cite[Corollary 6.8]{BBST23} and $M^x$ is continuous, we get that $M^x_{\tau^j}\to M^x_1$ a.s., and thus also in $L^2$ since Doob's $L^2$-inequality and $M^x_{1}\sim \pi_x \in \cP_2(\bR^d)$ imply $\sup_t \|M^x_t\|\in L^2$. 
Thus the law  $\pi^j_x$ of $M^x_{\tau^j}$ converges weakly to the law $\pi_x$ of $M^x_1$, and its second moment is finite and  also converges, i.e.~$\sm(\pi^j_x) \to \sm(\pi_x)$. It follows \cite[Theorem 7.12]{Vil03} that $\wass(\pi^j_x, \pi_x)\to 0$ as $j\to \infty$, and so the identity\footnote{This identity follows integrating the formula $\|x\|^2-2 \langle x,y \rangle + \|y\|^2=\|x-y\|^2$ w.r.t.~$r(dx,dy)$ and taking the infimum over $r\in \cpl(p,q)$.} 
$$\sm(p)-2\mcov(p,q)+\sm(q)= \wass(p,q), \quad p,q \in \cP_2(\bR^d)$$
implies $\mcov\left(\pi^j_x, \gamma\right)  \to\mcov\left(\pi_x, \gamma\right) $ as $j\to \infty$.
Since $M^x$ is a martingale we get $\pi^j_x \leqc \pi^{j+1}_x \leqc \pi_x$ for all $j$, so 
 \Cref{prop: Mcov is increasing} implies
\begin{align}
\label{eq: chain ineq mcov increasing}
0\leq \mcov\left(\pi^j_x, \gamma\right) \leq \mcov\left(\pi^{j+1}_x, \gamma\right)  \leq \mcov\left(\pi_x, \gamma\right)<\infty .
\end{align}
Clearly $M_{\tau^j}^x$ has values in $K^j$ for $x\in K^j$,  and  $M_{\tau^j}^x=x$ for $x \in I \setminus K^j$, and so $\pi^j_x$ is supported in $K^j$ (resp.~$\{x\}$) for $x\in K^j$ (resp.~$x \in I \setminus K^j$).
Finally, \cref{prop: Mcov is increasing} gives
 $$\mcov\left(\pi_x, \gamma\right)\leq \int \frac{1}{2}\|y\|^2 (\pi_x+\gamma)(dy)=:g(x),$$  and since $\nu=\int\mu(dx) \pi_x \in \cP_2(\bR^d) \ni \gamma$ it follows  that  $g\in L^1(\mu)$.
\end{proof}

\begin{lemma}
\label{prop: all depends just on the kernel}
Given  $\mu \leqc \nu$ in $\cP_{2}(\bR^{d})$, let $\pi^{SBM}$ be the Stretched Brownian Motion between $\mu$ and $\nu$ and  $(\pi_x^{SBM})_x$ be its $\mu$-disintegration.  
For any $\psi: \bR^d \to (-\infty,\infty]$ convex and $\mu$ a.s.~finite define 
$$L(\psi)(x):=\int \psi(y) \pi^{SBM}_x(dy) - \varphi^{\psi}(x), \quad x\in \bR^d.$$
Then 
\begin{align}
\label{eq: L bigger Mcov}	
L(\psi)\geq \mcov(\pi_\cdot^{SBM},\gamma) \geq 0 \quad \mu \text{ a.e.}
\end{align}
for any such $\psi$, and $(\psi_n)_{n}$ is dual optimising (i.e.~$\psi_n: \bR^d \to (-\infty,\infty]$ is convex and $\mu$ a.s.~finite for any $n\in \bN$, and satisfies \cref{eq: dual is solvable}) if and only if
\begin{align}
\label{eq: dual opt iff L1 norm to 0}
\| L(\psi_n) - \mcov(\pi_\cdot^{SBM},\gamma) \|_{L^1(\mu)} \to 0 \quad  \text{ as } \quad n \to \infty.
\end{align} 
\end{lemma}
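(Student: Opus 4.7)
The plan is to exploit the freedom in choosing $\pi \in \MT(\mu,\nu)$ in the definition \eqref{eq: def D} of the dual functional: since $\pi^{SBM} \in \MT(\mu,\nu)$, we pick $\pi = \pi^{SBM}$, so that $\cD(\psi) = \int L(\psi)\,d\mu$ for every convex $\mu$-a.s.~finite $\psi$. The claim then reduces to a pointwise comparison between $L(\psi)$ and $\mcov(\pi_\cdot^{SBM},\gamma)$ combined with the no-duality-gap identity \eqref{eq: no duality gap}.

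For the pointwise bound \eqref{eq: L bigger Mcov}, I would substitute the specific candidate $p = \pi_x^{SBM}$ (which lies in $\cP_2^x(\bR^d)$ for $\mu$-a.e.~$x$, by the martingale property of $\pi^{SBM}$) into the infimum defining $\varphi^\psi(x)$ in \eqref{eq: def varphi}. This immediately gives $\varphi^\psi(x) \leq \int \psi\,d\pi_x^{SBM} - \mcov(\pi_x^{SBM},\gamma)$, and rearranging yields $L(\psi)(x) \geq \mcov(\pi_x^{SBM},\gamma)$; note that \eqref{eq: varphi psi and int psi} guarantees $\varphi^\psi(x)<\infty$ $\mu$-a.e., so this rearrangement is unambiguous and places $L(\psi)(x)$ in $[0,\infty]$. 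The remaining bound $\mcov(\pi_x^{SBM},\gamma) \geq 0$ follows from the first inequality in \cref{prop: Mcov is increasing}, applied with $\alpha = \pi_x^{SBM}$ (of barycentre $x$) and $\zeta = \gamma$, since $\langle x, \bary(\gamma)\rangle = 0$.

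For the characterisation \eqref{eq: dual opt iff L1 norm to 0}, the no-duality-gap identity \eqref{eq: no duality gap} reads $D(\mu,\nu) = \int \mcov(\pi_x^{SBM},\gamma)\,\mu(dx) \in \bR$. Hence $(\psi_n)_n$ is dual optimising (i.e.~$\cD(\psi_n) \to D(\mu,\nu)$) if and only if $\int L(\psi_n)\,d\mu \to \int \mcov(\pi_x^{SBM},\gamma)\,d\mu$, i.e.~$\int \bigl(L(\psi_n) - \mcov(\pi_\cdot^{SBM},\gamma)\bigr)\,d\mu \to 0$. Since \eqref{eq: L bigger Mcov} makes the integrand nonnegative $\mu$-a.e., this is exactly the $L^1(\mu)$ convergence stated.

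This proof is essentially a pointwise reformulation of \eqref{eq: no duality gap}, made possible by the canonical choice $\pi = \pi^{SBM}$ in \eqref{eq: def D}; the only routine bookkeeping is to check that $\cD(\psi) = \int L(\psi)\,d\mu$ remains valid when $\cD(\psi) = \infty$, which is immediate from \eqref{eq: def D} once that specific $\pi$ is fixed. I do not expect a substantive obstacle.
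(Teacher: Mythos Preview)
Your proof is correct and follows exactly the same approach as the paper: plug $p=\pi_x^{SBM}$ into \eqref{eq: def varphi} to get the first inequality in \eqref{eq: L bigger Mcov}, invoke \cref{prop: Mcov is increasing} for the second, and then use \eqref{eq: no duality gap} together with the nonnegativity of $L(\psi_n)-\mcov(\pi_\cdot^{SBM},\gamma)$ to turn convergence of the integrals into $L^1(\mu)$ convergence.
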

\begin{proof}
The first inequality in \cref{eq: L bigger Mcov} follows  from  the definition  of $\varphi^{\psi}$ (\cref{eq: def varphi}), the second from \cref{prop: Mcov is increasing}.
By \cref{eq: no duality gap} $(\psi_n)_{n}$ is dual optimising if and only if
$$\cD(\psi_n)=\int L(\psi_n) d \mu \to \int \mcov(\pi_\cdot^{SBM},\gamma)d\mu <\infty$$ 
which by \cref{eq: L bigger Mcov} holds if and only if \cref{eq: dual opt iff L1 norm to 0} holds.
\end{proof}

\begin{proof}[Proof of \cref{prop: SBM from B}]
Trivially $\pi^B\in \MT(\mu^B,\nu^B)$.  
Choose any $\pi\in \MT(\mu^B,\nu^B)$, and let $(\pi_x)_x$ be its $\mu$-disintegration. Define 
\begin{align*}
\tilde{\pi}_x:= 
\begin{cases}
\pi_x & \text{ for } x\in B,\\
\pi^{SBM}_x & \text{ for } x\in \bR^d \setminus B.
\end{cases}, 
\quad \tilde{\pi}(dx,dy):=\mu(dx)\tilde{\pi}_x(dy).
\end{align*}

Notice that $\tilde{\pi}(\bR^d \times \cdot)=\nu$, and so $\tilde{\pi}\in \MT(\mu,\nu)$.  
The inequality
\begin{align}
\label{eq: pi^B maximal}
\int_{C} \mu(dx) \mcov(\pi_x^{SBM},\gamma) \geq \int_{C} \mu(dx) \mcov(\tilde{\pi}_x,\gamma) 
\end{align}
holds when $C=B$: indeed it holds when $C=\bR^d$ (since  $\tilde{\pi}\in \MT(\mu,\nu)$, this follows from \cref{eq: no duality gap}), and it holds with equality when $C=\bR^d \setminus B$ (by definition of $\tilde{\pi}_x$). Evaluating \cref{eq: pi^B maximal} with $C=B$ and dividing by $\mu(B)$ we get that
$$\int_{\bR^d} \mu^B(dx) \mcov(\pi_x^{SBM},\gamma) \geq \int_{\bR^d} \mu^B(dx) \mcov(\pi_x,\gamma)$$
and given that $\pi\in \MT(\mu^B,\nu^B)$ was arbitrary,  it follows that $\pi^B$ is the Stretched Brownian Motion between $\mu^B$ and $\nu^B$: indeed by  \cite[Theorem 3.3]{BBST23} the unique maximiser of $\pi \mapsto \int \mcov(\pi_x,\gamma) \mu'(dx)$ over $\pi \in \MT(\mu',\nu')$ is the Stretched Brownian Motion between $\mu'$ and $\nu'$, so the thesis follows taking $\mu'=\mu^B,\nu'=\nu^B$.

If \cref{eq: dual opt iff L1 norm to 0} holds then it holds with $\mu$ replaced by $\mu^B$, and since 
 the $\mu$-disintegration $(\pi^B_x)_x$ of the Stretched Brownian Motion  $\pi^B$ between $\mu^B$ and $\nu^B$ equals the $\mu$-disintegration  $(\pi^{SBM}_x)_x$ of the Stretched Brownian Motion  between $\mu$ and $\nu$,  if $(\psi_n)_{n}$ is a dual optimising sequence for $(\mu,\nu)$ then by \cref{prop: all depends just on the kernel}  it is also is a dual optimising sequence for $(\mu^B,\nu^B)$. 
 In particular, if a dual optimiser $\dopt$ for $(\mu,\nu)$ exists, then it is also a dual optimiser for $(\mu^B, \nu^B)$, since $\psi$ is a dual optimiser iff  $\psi_n=\psi, n\in \bN$ is a dual optimising sequence  (here an alternative proof: use $(\pi^B_x)_x=(\pi^{SBM}_x)_x$ and apply \cite[Definition 7.14, Lemma 7.19]{BBST23}).
 
By  \cite[Theorems 1.4 and Remark D.3]{BBST23} two probabilities are irreducible iff there exists a Bass martingale connecting them, and by \cite[Theorem 1.3]{BBST23} the Stretched Brownian Motion between irreducible measures is a Bass martingale, so the statements about irreducibility and Bass martingales are equivalent. 

To show that they hold, assume that $(\mu,\nu)$ is irreducible, so $\pi_x^{SBM} \sim \nu$  for $\mu$ a.e.~$x$ by \cite[Corollary 7.7]{BBST23}. Since, for any Borel  $A\subseteq \bR^d$, $\nu_B(A)=0$ holds iff $\pi_x^{SBM}(A)=0$ for $\mu_B$ a.e.~$x$, we conclude that $\nu_B(A)=0$  iff $\nu(A)=0$, i.e.~$\nu_B\sim \nu$; thus $C_B:=\cco{\spt{\nu_B}}$ equals $C:=\cco{\spt{\nu}}$, and  \cite[Theorem D.1]{BBST23} implies that $(\mu_B, \nu_B)$ is irreducible.  Finally, recall that if $(\mu,\nu)$ is irreducible,  the dual optimiser $\dopt$ exists \cite[Theorem 7.6]{BBST23}.
\end{proof}

\bibliography{librarySBM}{}
\bibliographystyle{alpha}

\end{document}